\documentclass[12pt]{amsart}
\usepackage[utf8]{inputenc}
\usepackage[margin=1in]{geometry}
\usepackage{amsmath,amssymb,amsthm}
\usepackage{tikz-cd}
\usepackage{graphicx}
\usepackage{hyperref}
\usepackage{color}
\DeclareMathOperator{\Ricc}{Ric}
\DeclareMathOperator{\grad}{grad}

\DeclareMathOperator{\Hess}{Hess}
\DeclareMathOperator{\trl}{\Delta_B}
\DeclareMathOperator{\tr}{tr}
\DeclareMathOperator{\ind}{ind}
\DeclareMathOperator{\Ker}{Ker}
\DeclareMathOperator{\Id}{Id}
\DeclareMathOperator{\Herm}{Herm}
\DeclareMathOperator{\Fix}{Fix}
\newcommand{\Gr}{\mathrm{Gr}_{\mathbb{C}}(k,N)}
\newcommand{\Grtf}{\mathrm{Gr}_{\mathbb{C}}(2,4)}

\newcommand{\J}{\mathcal{J}}

\newtheorem*{definition}{Definition}
\newtheorem{Remark}{Remark}
\newtheorem*{theorem0}{Theorem 0 (El Soufi \cite{ElsoufiIndex})}
\newtheorem*{theorem1}{Theorem 1}
\newtheorem{Corollary}{Corollary}
\newtheorem*{theorem3}{Theorem 2}

\title{Harmonic maps from spheres with lowest possible index and their properties}
\author[]{Egor Surkov}
\date{}
\begin{document}
\begin{abstract}
 In this note, we prove that for $n > 2$ a harmonic map $\mathbb{S}^n \to \mathbb{CP}^m$,  of index  $n+1$ is pseudo horizontally weakly conformal, that $n$ must be odd. In particular, for even $n$ every non-constant harmonic map $\mathbb{S}^n \to \mathbb{CP}^m$ has index at least $n+2$. We also present a harmonic map from $\mathbb{S}^4$ to $\mathrm{Gr}_{\mathbb{C}}(2,4)$ of index $5$ that is not pseudo horizontally weakly conformal, so the statement does not extend to Grassmannians.
\end{abstract}

\maketitle
It is well known that the stability and instability of geodesics depend heavily on the curvature of the Riemannian manifold. They can be seen as the first geometrically interesting examples of harmonic maps. At the same time, if we consider geodesics as harmonic maps from the unit circle $\mathbb{S}^1$, their stability depends only on the curvature of the target manifold.

Harmonic maps from higher-dimensional spheres can be seen as a generalization of energy minimizing curves. For $n \geqslant 3$ the energy functional is no longer invariant under the large conformal group of $\mathbb{S}^n$, and this forces non-constant harmonic maps to be unstable. This was first established by Xin \cite{XinInstab}, who proved that for $n \geqslant 3$ every non-constant harmonic map from $\mathbb{S}^n$ is unstable, and by Smith \cite{SmithIndex}, who proved that for $n \geqslant 3$ the identity map of $\mathbb{S}^n$ has index $n+1$. Later, El Soufi \cite{ElsoufiIndex} proved that for $n \geqslant 3$ the index of every non-constant harmonic map from $\mathbb{S}^n$ is at least $n+1$, the index of the identity map. 

Hence, it is natural to ask which properties appear when harmonic maps have the minimal possible index. In the case when the domain manifold admits stable harmonic maps, it has been proven that stable maps have rigid properties \cite{Chen1}, \cite{Chen2}, \cite{StabGrass}; precisely, they are pseudo horizontally weakly conformal, see the definition below. For the embedded spheres, there is a recent result that says that spheres of minimal possible index are geodesic ones \cite{KetoverMinIndex}.

We prove that a harmonic map from $\mathbb{S}^n$ of minimal possible Morse index is pseudo horizontally weakly conformal when the target is $\mathbb{CP}^m$, and that it is a harmonic morphism when the target is $\mathbb{CP}^1 \cong \mathbb{S}^2$. We also present a counterexample showing that in the case of the complex Grassmannian $\Gr$ the situation is more complicated.

\subsection{Basic facts about harmonic maps}

Let us fix some notation and conventions. Consider a real vector bundle $E$ over a Riemannian manifold, equipped with a Euclidean scalar product and a metric connection $\nabla^{E}$. We define the trace Laplacian or Bochner Laplacian on $\Gamma(E)$ as
\begin{equation}
\label{trl}
\Delta_B^E = - \sum_\alpha \left( \nabla^{E}_{e_\alpha} \nabla^{E}_{e_\alpha} - \nabla^{E}_{\nabla_{e_\alpha}e_\alpha} \right),
\end{equation}
where $e_\alpha$ is a local orthonormal frame of the base and $\nabla$ is the Levi-Civita connection. Note that $\Delta_B^E$ is a nonnegative operator. We denote $R(\cdot\,,\cdot)^E$ defined by
\begin{equation}
\label{R}
R(X,Y)^{E}s = \nabla^{E}_X \nabla^{E}_Y s - \nabla^{E}_Y \nabla^{E}_X s - \nabla^{E}_{[X,Y]} s, \quad s \in \Gamma(E),
\end{equation}
the curvature of the connection $\nabla^{E}$. With this convention, the Ricci tensor is $\Ricc(Y) = \sum_i R(Y,e_i)e_i$ and $\langle R(X,Y)Y,X\rangle \geqslant 0$ on a manifold of nonnegative sectional curvature. Further, we often omit the superscript in $\nabla^{E}$ and $\Delta_B^E$, since $E$ is usually clear from the context. On the tangent bundle, we always consider the Levi-Civita connection with respect to the Riemannian metric. We deal mostly with Levi-Civita connections, pullbacks of Levi-Civita connections and the connections constructed from them by the natural bundle operations. We denote the Riemannian curvature tensor of a Riemannian manifold $(M,h)$ by $R^M$ instead of $R^{TM}$.

Now we recall several basic facts from the theory of harmonic maps, our main references are \cite{XinHarm} and \cite{HarmMorphism}.

Let $(M,h)$ be a closed Riemannian manifold and $\mathbb{S}^n$ the standard round $n$-dimensional sphere of radius $1$. Consider a map $\Psi \in C^{\infty}(\mathbb{S}^n,M)$. Recall that $d\Psi \in \Gamma(\mathrm{Hom}(T\mathbb{S}^n, \Psi^{*}TM)),$ and we can compute its Hilbert-Schmidt norm using the induced metric on the bundle $T^*\mathbb{S}^n \otimes \Psi^{*}TM$. We define the energy density $e(\Psi)$ of $\Psi$ as the squared length of $d\Psi$,
$$e(\Psi)=\|d\Psi\|^2 = g^{\alpha \beta} \frac{\partial \Psi^{i}}{\partial x^{\alpha}} \frac{\partial \Psi^{j}}{\partial x^{\beta}} h_{ij} = \sum_{\alpha=1}^n \langle d\Psi (e_\alpha), d\Psi(e_\alpha)\rangle_h,$$
where $e_\alpha$ is a local orthonormal frame in $\Gamma(T\mathbb{S}^n)$.

We associate with $\Psi$ the tension field $\tau(\Psi) \in \Gamma(\Psi^{*}TM)$ defined as
\begin{equation}
\label{tensionfield}
\tau(\Psi)=\tr(\nabla d\Psi) = \sum_\alpha \nabla d\Psi (e_\alpha, e_\alpha) = \sum_\alpha \Big[ \nabla_{e_\alpha}(d\Psi(e_\alpha)) - d\Psi(\nabla_{e_\alpha}e_\alpha) \Big].
\end{equation}

The energy functional is defined as
\begin{equation}
\label{energyFunc}
 E(\Psi) = \frac{1}{2}\int_{\mathbb{S}^n} e(\Psi)\, dV_{\mathbb{S}^n} = \frac{1}{2}\int_{\mathbb{S}^n} \|d\Psi\|^2\, dV_{\mathbb{S}^n}.
\end{equation}
The critical points of $E$ are called harmonic maps. Harmonic maps naturally appear as a generalization of classical harmonic functions. Under suitable curvature restrictions on the target, they can be seen as the nicest maps in their homotopy class, see \cite{EelsSimpson}.

It is well known that the Euler–Lagrange equation for the energy functional \eqref{energyFunc} is an elliptic equation
\begin{equation}
\label{Equation}
\tau(\Psi) = 0.
\end{equation}

Consider a two-parameter variation $\Psi_{s,t}: \mathbb{S}^n \times (-\varepsilon,\varepsilon) \times (-\varepsilon,\varepsilon) \to M$ with $\Psi_{0,0} = \Psi$, and let $v = \partial_s \Psi_{s,t}|_{(0,0)}$, $w = \partial_t \Psi_{s,t}|_{(0,0)} \in \Gamma(\Psi^*TM)$ be the variation fields. For a harmonic map $\Psi$ the second variation $\Hess E|_\Psi$ of the energy functional is
\begin{equation}
\label{SecondVar}
 \Hess E|_\Psi (v,w) := \frac{\partial^2 }{\partial t\, \partial s} E(\Psi_{s,t})\Big|_{(0,0)}
 = \int_{\mathbb{S}^n} \big\langle \trl v - \tr(R^{M}(v,d\Psi)d\Psi) ,\, w \big\rangle\, dV_{\mathbb{S}^n}.
\end{equation}
where
\begin{equation}
\label{trR}
\tr(R^M(v, d\Psi) d\Psi) = \sum_\alpha R^M(v, d\Psi (e_\alpha))d\Psi (e_\alpha).
\end{equation}

The operator in formula \eqref{SecondVar} is called the Jacobi operator,
\begin{equation}
\label{JacobiOp}
J(v) = \trl v - \tr(R^{M}(v,d\Psi)d\Psi),
\end{equation}
and it generalizes the Jacobi operator from the theory of geodesics.

We recall several facts about the second variation. Firstly, $\Hess E|_\Psi(\cdot\,,\cdot)$ is a symmetric bilinear form on $\Gamma(\Psi^{*}TM)$. Secondly, the Morse index of $\Psi$ is the dimension of a maximal subspace of $\Gamma(\Psi^{*}TM)$ on which the associated quadratic form $v \mapsto \Hess E|_\Psi(v,v)$ is negative definite, equivalently, it is the number of negative eigenvalues of $J$ counted with multiplicity. Thirdly, by the general theory of self-adjoint elliptic operators the index is finite, since $\mathbb{S}^n$ is compact.

\subsection{Special types of maps}

Consider a smooth map $\Psi: (N,g) \to (M,h)$ between Riemannian manifolds. At every point $x \in N$ one has the orthogonal decomposition $$T_xN = \mathcal{H}_x \oplus \mathcal{V}_x,$$ where $\mathcal{V}_x = \Ker d\Psi_x$ is called the vertical subspace and $\mathcal{H}_x = \mathcal{V}_x^{\perp}$ is called the horizontal subspace.
\begin{definition}
A map $\Psi$ is called weakly horizontally  conformal (WHC) if for every $x \in N$ either $d\Psi_x = 0$, or $d\Psi_x|_{\mathcal{H}_x}: \mathcal{H}_x \to T_{\Psi(x)}M$ is surjective and conformal, i.e. there is $\lambda(x) > 0$ such that $h(d\Psi(X),d\Psi(Y)) = \lambda(x)^2 g(X,Y)$ for all $X,Y \in \mathcal{H}_x$. The function $\lambda$ is called the dilation of $\Psi$.
\end{definition}
\begin{definition}
A map $\Psi: (N,g) \to (M,h)$ is called a harmonic morphism if it is harmonic and WHC, see \cite{HarmMorphism}).
\end{definition}

Suppose now that $(M,h,\J)$ is a Hermitian manifold with complex structure $\J$ and let $\varphi: (N,g) \to (M,h)$ be a smooth map. Denote by $d\varphi^*: T_{\varphi(x)}M \to T_xN$ the pointwise adjoint of $d\varphi_x$ with respect to $g$ and $h$, and extend $g$ and $d\varphi^*$ complex-bilinearly to the complexified tangent spaces.
\begin{definition}
The map $\varphi$ is called pseudo horizontally weakly conformal (PHWC) if
\begin{equation}
\label{PHWC}
g(d\varphi^*(Z),d\varphi^*(W)) = 0 \quad \text{for all } Z, W \in T^{1,0}M .
\end{equation}
\end{definition}
This is equivalent to each of the following conditions, see \cite{HarmMorphism}
\begin{enumerate}
\item[(1)] $g(d\varphi^*(\J X),d\varphi^*(\J Y)) = g(d\varphi^*(X),d\varphi^*(Y))$ for all real $X,Y \in TM$, equivalently, the endomorphism $d\varphi \circ d\varphi^*$ of $TM$ commutes with $\J$;
\item[(2)] for a basis $W_1, \dots, W_m$ of $T^{1,0}_{\varphi(x)}M$ one has $g(d\varphi^*(W_\alpha),d\varphi^*(W_\beta)) = 0$ for all $\alpha, \beta$;
\item[(3)] in local coordinates $x^i$ on $N$ and local holomorphic coordinates $z^\alpha$ on $M$,
\begin{equation}
\label{PHWC4}
 g^{ij}\frac{\partial \varphi^\alpha}{\partial x^i}\frac{\partial \varphi^\beta}{\partial x^j} = 0 \quad \text{for all } \alpha, \beta .
\end{equation}
\end{enumerate}
When $M$ is a Riemann surface, a symmetric endomorphism of $T_{\varphi(x)}M$ commuting with $\J$ is a multiple of the identity, so PHWC coincides with WHC. The detailed theory of harmonic morphisms can be found in the book \cite{HarmMorphism}.

\subsection{Results and motivations}
A natural question is what bounds on the index can be given in terms of the geometry of the target. If $M$ has nonpositive sectional curvature, then every harmonic map into $M$ is stable, see e.g. \cite{XinHarm}, combined with Theorem 0 below this shows that for $n \geqslant 3$ every harmonic map from $\mathbb{S}^n$ to such an $M$ is constant. In 1995 Ahmad El Soufi proved the following theorem.
\begin{theorem0}
Let $n \geqslant 3$ and let $\Psi$ be a harmonic map from $\mathbb{S}^n$ with the standard metric to a Riemannian manifold $(M,h)$. If $\Psi$ is not constant, then $\ind \Psi \geqslant n+1.$
\end{theorem0}
The restriction $n \geqslant 3$ is necessary, becasue, for example,  holomorphic maps $\mathbb{S}^2 \to \mathbb{S}^2$ are stable.

In the next section we present an alternative proof of the El Soufi theorem. The original proof uses a direct computation of the second derivative of $E$ along special variations; we give a proof based on the Bochner method. This approach was mentioned by El Soufi himself in \cite{ElsoufiIndex}, to the best of the author's knowledge it has not been published, and we would like to fill this gap.

In Section 2 we prove the following theorem, which is our main result. Recall that for $a \in \mathbb{R}^{n+1}$ we denote by $V_a$ the field given by \eqref{ConfField} on $\mathbb{S}^n$.
\begin{theorem1}
Let $n \geqslant 3$ and let $\Psi$ be a harmonic map from $\mathbb{S}^n$ to $\mathbb{CP}^m$ with the Fubini--Study metric such that $\ind \Psi = n+1$. Then
\begin{enumerate}
\item[(a)] $\Psi$ is PHWC;
\item[(b)] $n$ is odd;
\item[(c)] there exists $S \in \mathrm{End}(\mathbb{R}^{n+1})$ with $S^2 = -\Id$ such that
\begin{equation}
\label{Cintertwine}
\J\, d\Psi(V_a) = d\Psi(V_{Sa}) \qquad \text{for all } a \in \mathbb{R}^{n+1}.
\end{equation}
\end{enumerate}
\end{theorem1}
 As immediate corollaries, we obtain
\begin{Corollary}
Let $n \geqslant 3$ and let $\Psi$ be a harmonic map from $\mathbb{S}^n$ with the standard metric to the round sphere $\mathbb{S}^2$ with $\ind \Psi = n+1$. Then $n$ is odd and $\Psi$ is a harmonic morphism.
\end{Corollary}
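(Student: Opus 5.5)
This follows directly from Theorem 1 with $m=1$. First we identify the target: the round sphere $\mathbb{S}^2$ is isometric, up to a constant rescaling of the metric, to $\mathbb{CP}^1$ with the Fubini--Study metric. For the standard normalization the Fubini--Study metric on $\mathbb{CP}^1$ is the round sphere of radius $1/2$, i.e. of constant curvature $4$. Neither harmonicity nor the Morse index changes when the target metric $h$ is replaced by $c\,h$ with a constant $c>0$. Indeed, the tension field only involves the Levi-Civita connection of $h$, and that connection is unchanged under constant rescaling. For the second variation \eqref{SecondVar}, scaling the target metric multiplies the inner product on $\Psi^*TM$ by $c$, and the $(1,3)$ curvature tensor $R^M$ is unchanged. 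Hence the Jacobi operator $J$ in \eqref{JacobiOp} stays the same as an operator on $\Gamma(\Psi^*TM)$, and the quadratic form is simply multiplied by $c$. Its negative-definite subspaces are therefore the same, so the index is the same. Consequently a harmonic map $\Psi:\mathbb{S}^n\to\mathbb{S}^2$ with $\ind\Psi=n+1$ becomes, after composing with an isometry to $(\mathbb{CP}^1, c\cdot g_{FS})$, a harmonic map into $\mathbb{CP}^1$ with its Fubini--Study metric, and its index is still $n+1$.

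Next we apply Theorem 1 with $m=1$ and $n\geqslant 3$. Part (b) gives directly that $n$ is odd. Part (a) gives that $\Psi$ is PHWC as a map into $\mathbb{CP}^1$. PHWC is a condition on $d\Psi\circ d\Psi^*$ commuting with $\J$, and the adjoint $d\Psi^*$ for a target metric $c\,h$ is just $c$ times the adjoint for $h$. So the PHWC condition is also invariant under constant rescaling of the target and holds with respect to the round metric.

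Finally we pass from PHWC to WHC. As remarked after the definition of PHWC, when the target is a Riemann surface the symmetric endomorphism $d\Psi_x\circ d\Psi_x^*$ of the real $2$-dimensional space $T_{\Psi(x)}\mathbb{S}^2$ commutes with $\J$. Such an endomorphism must be of the form $\lambda(x)^2\Id$. If $\lambda(x)=0$, then $d\Psi_x^*=0$, hence $d\Psi_x=0$. Otherwise $d\Psi_x$ is surjective, its restriction to $\mathcal{H}_x=(\Ker d\Psi_x)^\perp=\mathrm{Im}\, d\Psi_x^*$ is an isomorphism onto $T_{\Psi(x)}\mathbb{S}^2$, and $d\Psi_x d\Psi_x^*=\lambda^2\Id$ implies $h(d\Psi X, d\Psi Y)=\lambda^2 g(X,Y)$ for $X,Y\in\mathcal{H}_x$. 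To see this, write $X=d\Psi^*\xi$ and $Y=d\Psi^*\eta$; then $g(X,Y)=h(d\Psi d\Psi^*\xi,\eta)=\lambda^2 h(\xi,\eta)$ and $h(d\Psi X,d\Psi Y)=\lambda^4 h(\xi,\eta)$. So $\Psi$ is WHC, and being harmonic, it is a harmonic morphism.

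There is no substantive obstacle here. The only points that need care are the invariance of the index and of PHWC under the constant rescaling relating the round $\mathbb{S}^2$ to the Fubini--Study $\mathbb{CP}^1$, and the linear-algebra step from PHWC to WHC. All of these are short checks.
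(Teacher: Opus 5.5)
Your proposal is correct and follows the paper's route: specialize Theorem 1 to $m=1$, use that harmonicity, the index and PHWC are unchanged under the constant rescaling relating the round $\mathbb{S}^2$ to the Fubini--Study $\mathbb{CP}^1$, and use that PHWC coincides with WHC for surface targets. Your explicit checks (the Jacobi operator unchanged while the quadratic form scales by $c$, $d\Psi^*$ scaling by $c$, and passing from $d\Psi\circ d\Psi^*=\lambda^2\Id$ to horizontal conformality) fill in details the paper leaves to the reader.
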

\begin{Corollary}
Let $n \geqslant 4$ be even, and let $\Psi$ be a non-constant harmonic map from $\mathbb{S}^n$ to $\mathbb{CP}^m$, or to $\mathbb{S}^2$ equipped with any metric of positive Gaussian curvature. Then $\ind \Psi \geqslant n+2$.
\end{Corollary}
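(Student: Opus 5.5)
The plan is to argue by contradiction, combining Theorem 0 with Theorem 1 and, for general metrics on $\mathbb{S}^2$, a version of Theorem 1 for those metrics. Let $n \geqslant 4$ be even and let $\Psi$ be a non-constant harmonic map. Theorem 0 gives $\ind \Psi \geqslant n+1$, so it is enough to exclude $\ind \Psi = n+1$. For $\mathbb{CP}^m$ with the Fubini--Study metric, Theorem 1(b) says that index $n+1$ forces $n$ to be odd, which contradicts our assumption. Part (c) shows the same thing directly: a real endomorphism $S$ of $\mathbb{R}^{n+1}$ with $S^2=-\Id$ satisfies $\det(S)^2=(-1)^{n+1}=-1$, which is impossible when $n+1$ is odd. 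The case $\mathbb{S}^2=\mathbb{CP}^1$ with the round metric is covered by the same argument, and this is also how Corollary 1 gives oddness of $n$.

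The substantive case is $\mathbb{S}^2$ with an arbitrary metric $h$ of Gaussian curvature $K>0$. Theorem 1 is stated only for the Fubini--Study metric, so I would go back through its proof. I expect that proof to use the index-$(n+1)$ hypothesis as follows. The space $\{d\Psi(V_a)\}$ already gives $n+1$ negative directions, so the Hessian must be nonnegative on a suitable complement containing the rotated fields $\J\, d\Psi(V_a)$. Then a Bochner-type computation, summed over $a$ and paired with $v$ and $\J v$, turns this nonnegativity into the pointwise identity that $d\Psi\circ d\Psi^*$ commutes with $\J$, and into the intertwining relation \eqref{Cintertwine}.

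In two real dimensions the curvature is $R(X,Y)Z=K(\langle Y,Z\rangle X-\langle X,Z\rangle Y)$. For every $v$ and every $e$ this gives
\[
\langle R(v,d\Psi e)d\Psi e,v\rangle+\langle R(\J v,d\Psi e)d\Psi e,\J v\rangle = K\,|v|^2|d\Psi e|^2 .
\]
This is exactly the combination of holomorphic sectional and bisectional curvature that appears in the Fubini--Study case, with the constant replaced by the positive function $K$. Moreover $\J$ is parallel for any metric on a surface, since every oriented surface is Kähler. So I would check that each step of the proof of Theorem 1 uses only three ingredients: the Kähler property, the displayed identity, and positivity of the curvature factor. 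Under that check, the conclusion (c) still holds for $(\mathbb{S}^2,h)$ and yields the same determinant contradiction.

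The main obstacle is precisely this verification. If the proof of Theorem 1 integrates a constant curvature factor, or uses the homogeneity of $\mathbb{CP}^m$ (for example Killing fields), the argument will not transfer verbatim. In that case I would first try to keep $K$ inside the integrand: the sum over $a$ of the negative Hessian terms should still be a weighted integral of $K\,(\cdot)$ with $K>0$, and this would force the same vanishing as before. The uniformization theorem makes $h$ conformal to the round metric, but the energy is not invariant under conformal changes of the target, so that route cannot replace this step.
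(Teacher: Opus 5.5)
Your treatment of $\mathbb{CP}^m$ (including the round $\mathbb{S}^2=\mathbb{CP}^1$) is exactly the paper's: Theorem 0 gives $\ind\Psi\geqslant n+1$, and index $n+1$ is excluded by Theorem 1(b), which the paper proves by the determinant argument applied to the $S$ of part (c). For a metric of positive curvature on $\mathbb{S}^2$ the paper also reruns the proof of Theorem 1 (Remark 1). Here your proposal has a gap: it only announces this check, and the mechanism you predict would not work as stated.

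The index hypothesis does not make the Hessian nonnegative on a complement containing the rotated fields. In the situation of Theorem 1 one actually gets $\J\mathcal{K}=\mathcal{K}$, where $\mathcal{K}=\mathrm{span}\{\varkappa_k\}$ with $\varkappa_k=d\Psi(X_k)$, and $\Hess E|_\Psi$ is negative definite on $\mathcal{K}$. So no such complement exists. What the hypothesis gives is that $L=J+(n-2)\geqslant 0$ on all sections with $\Ker L=\mathcal{K}$. Equivalently, the shifted form $Q(w)=\Hess E|_\Psi(w,w)+(n-2)\int|w|^2\,dV$ is nonnegative and vanishes on each $\varkappa_k$. Because $\nabla\J=0$, the difference $Q(\J w)-Q(w)$ involves only curvature. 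The relevant input is the difference identity
\[
\langle R(\J V,U)U,\J V\rangle-\langle R(V,U)U,V\rangle=K\big(\langle V,U\rangle^2-\langle \J V,U\rangle^2\big),
\]
which is the surface analogue of \eqref{CPdiff}. Your sum identity is not the Fubini--Study ingredient for $m\geqslant 2$: there the sum equals $2|V|^2|U|^2+2\langle V,U\rangle^2+2\langle\J V,U\rangle^2$. On a surface, however, it is equivalent to the difference identity via the formula $R(X,Y)Z=K(\langle Y,Z\rangle X-\langle X,Z\rangle Y)$ that you wrote down.

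To finish, set $A_\Psi=d\Psi\circ d\Psi^*=\sum_k\varkappa_k\otimes\varkappa_k^*$ and sum over $k$:
\[
\sum_{k=1}^{n+1}Q(\J\varkappa_k)=-\int_{\mathbb{S}^n}K\big(\tr(A_\Psi^2)+\tr(A_\Psi\J A_\Psi\J)\big)\,dV=-\frac12\int_{\mathbb{S}^n}K\,|[A_\Psi,\J]|^2\,dV\leqslant 0 .
\]
Since each term on the left is nonnegative, every $Q(\J\varkappa_k)$ vanishes, so $\J\varkappa_k\in\Ker L=\mathcal{K}$. Then $S=\Lambda^{-1}\J\Lambda$ with $\Lambda(a)=d\Psi(V_a)$ satisfies $S^2=-\Id$ on $\mathbb{R}^{n+1}$, which is impossible for $n$ even.

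You are right that the corollary needs (c), not merely the commutation $[A_\Psi,\J]=0$ that Remark 1 states explicitly. Note also that the parity conclusion uses only the sign of the right-hand side. So $K\geqslant 0$ would already suffice, and strict positivity is needed only for the WHC conclusion.
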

For $n=3$, Corollary 1 is the theorem of Rivi\`ere \cite{Riviere}. The round metric on $\mathbb{S}^2$ in Corollary 1 can be replaced by any metric of positive curvature, see Remark~\ref{remarkBumpMetr}. Corollary 2 improves El Soufi's bound for these targets. 

Our proof follows Rivi\`ere in the sense that the test fields for the shifted Hessian form are exactly the fields from El Soufi theorem after aplying complex structure. However, we use intrinsic calculations instead of the extrinsic method used by Rivi\`ere. 

In Section 3 we present the following example. 
\begin{theorem3}
There exists a harmonic map $\Psi$ from $\mathbb{S}^4$ to $\Grtf$ with $\ind \Psi = 5$ which is not PHWC.
\end{theorem3}
Essentially, Theorem 2 tells us that in the case of Grassmannians, the generalization of Theorems 1 and Corollary 1 is not straightforward, despite the fact that Chen's result on stable maps extends to Grassmannians \cite{StabGrass}.

\section{El Soufi theorem}
\subsection{Basic facts from differential geometry}
Let us recall basic facts about the Bochner-Weitzenböck technique for vector-bundle-valued differential forms, our basic reference is \cite[Chapter 1.3]{XinHarm}. Note that our sign convention \eqref{R} for the curvature is opposite to that of \cite{XinHarm}. All formulas below are stated in our convention.

Let $E$ be a real vector bundle with a metric and a connection compatible with it  over a Riemannian manifold $(N,g)$, and let $\omega$ be an $E$-valued differential $q$-form. Recall the operator $d: \Gamma(\Lambda^qT^*N \otimes E) \to \Gamma(\Lambda^{q+1}T^*N\otimes E)$,
\begin{equation}
\label{dop}
d\omega(S_0,\dots,S_q) = \sum_{k=0}^q (-1)^k(\nabla_{S_k}\omega)(S_0,\dots,\hat{S}_k,\dots,S_q),
\end{equation}
and its formal adjoint $\delta = d^*: \Gamma(\Lambda^qT^*N \otimes E) \to \Gamma(\Lambda^{q-1}T^*N \otimes E)$,
\begin{equation}
\label{deltaop}
\delta\omega(S_1,\dots,S_{q-1}) = - \sum_{k=1}^{\dim N}(\nabla_{e_k}\omega)(e_k,S_1,\dots,S_{q-1}),
\end{equation}
where $\nabla$ is the connection on $\Lambda^*T^*N \otimes E$ induced by the Levi-Civita connection and $\nabla^E$, and $S_i \in \Gamma(TN)$. The Hodge--Laplace operator on $\Gamma(\Lambda^{*}T^*N\otimes E)$ is
\begin{equation}
\label{HodgeL}
\Delta_H = d\delta + \delta d .
\end{equation}
Let $\Psi: (N,g) \to (M,h)$ be a smooth map and $E = \Psi^*TM$. Since the Levi-Civita connection is torsion-free, $d(d\Psi) = 0$, and by \eqref{deltaop} and \eqref{tensionfield} one has $\delta d\Psi = -\tau(\Psi)$. Hence $\Psi$ is harmonic if and only if $d\Psi$ is harmonic,
\begin{equation}
\label{HarmHod}
\Delta_H d\Psi = 0,
\end{equation}
note that for a closed $N$, $\Delta_H d\Psi = 0$ implies $\delta d\Psi = 0$ by integration by parts.

The Weitzenböck formula relates $\trl$ and $\Delta_H$ on $\Gamma(\Lambda^{*}T^*N\otimes E)$, see \cite[Chapter 1.3.1]{XinHarm}
\begin{equation}
\label{Wformula}
\Delta_H = \trl + S,
\end{equation}
where
\begin{equation}
\label{Wformulas}
(S\omega)(S_1,\dots,S_q) = \sum_{k,i} (-1)^{k+1}(R(e_i,S_k)\omega)(e_i,S_1,\dots,\hat{S}_k,\dots,S_q),
\end{equation}
and $R$ denotes the curvature of the induced connection on $\Lambda^{*}T^*N\otimes E$. Note that $R$ acts nontrivially on the $\Lambda^*T^*N$ factor of $\omega$. For a $1$-form this reads as $(S\omega)(Y) = \sum_i (R(e_i,Y)\omega)(e_i)$.

For $\omega = d\Psi$ and $Y \in \Gamma(TN)$ we have 
\[
(R(e_i,Y)d\Psi)(e_i) = R^{\Psi^*TM}(e_i,Y)(d\Psi(e_i)) - d\Psi(R^N(e_i,Y)e_i)
\]
with $R^{\Psi^*TM}(e_i,Y) = R^M(d\Psi(e_i),d\Psi(Y))$, and $\sum_i R^N(e_i,Y)e_i = -\Ricc^N(Y)$. Therefore
\begin{equation}
\label{Saction}
(Sd\Psi)(Y) = \sum_i R^M(d\Psi(e_i),d\Psi(Y))d\Psi(e_i) + d\Psi(\Ricc^N(Y)),
\end{equation}
see \cite[Formula (1.3.12)]{XinHarm}. 

\subsection{Case of the round sphere}
In this section, given a point $p$, we use a local orthonormal frame $e_i$ in $\Gamma(T\mathbb{S}^n)$ such that $\nabla_{e_i}e_i|_{p}= 0$
for local computations at the point $p$.

We are interested in the vector fields whose flows generate conformal transformations of the sphere, see e.g. \cite{ElSoufiConf}. For each fixed vector $a \in \mathbb{R}^{n+1}$ we associate a vector field $V_a \in \Gamma(T\mathbb{S}^n)$,
\begin{equation}
\label{ConfField}
V_a = a - \langle a,\nu \rangle \nu,
\end{equation}
where $\nu$ is the outward unit normal field of the canonical embedding $\mathbb{S}^n \subset \mathbb{R}^{n+1}$. The $V_a$ is the gradient of the restriction to $\mathbb{S}^n$ of the linear function $\langle a, \cdot\rangle$. It is known that the flows $\Phi_{V_a}^t$ together with the rotations of $\mathbb{R}^{n+1}$ generate the group of conformal diffeomorphisms of the sphere, so called Möbius group, which is isomorphic to $SO^{+}(n+1,1)$, see e.g. \cite{ElSoufiConf}, \cite{Helga}. Let $s_1,\dots,s_{n+1}$ be the canonical orthonormal basis of $\mathbb{R}^{n+1}$ and $x^1,\dots,x^{n+1}$ the corresponding Cartesian coordinates. We define the vector fields
\begin{equation}
\label{KeyField}
X_i = V_{s_i} = s_i - \langle s_i,\nu \rangle \nu = \grad (x^i|_{\mathbb{S}^n}),
\end{equation}
so that, in a local orthonormal frame,
\begin{equation}
\label{locfrX}
X_i = \sum_j\langle s_i, e_j\rangle e_j .
\end{equation}
Since $\partial_{e_k}\nu = e_k$ for the canonical embedding, we have
\begin{equation}
\label{nablaX}
\nabla_{e_k}X_i = \nabla_{e_k} ( s_i - \langle s_i,\nu \rangle \nu ) = -x^i e_k ,
\end{equation}
because all the other terms are normal to the sphere. Using \eqref{locfrX} and \eqref{nablaX} we get at the point $p$
\begin{equation}
\label{traceLaplX}
\trl X_i = - \sum_j \nabla_{e_j} \nabla_{e_j} X_i = \sum_j e_j(x^i)\, e_j = X_i,
\end{equation}
where we used $\nabla_{e_j}e_j = 0$ at $p$. Thus the $X_i$ are eigenfields of $\trl$ with eigenvalue $1$. Note also that
\begin{equation}
\label{XkId}
\sum_{k=1}^{n+1} \langle X_k, Y\rangle X_k = Y \qquad \text{for all } Y \in T\mathbb{S}^n,
\end{equation}
since $\sum_k \langle s_k, Y\rangle s_k = Y$ and $Y \perp \nu$.

\subsection{Bochner type proof of the El Soufi theorem}
\begin{proof}[Proof of El Soufi theorem]
Let $\Psi \in C^{\infty}(\mathbb{S}^n,M)$ be harmonic. The harmonicity condition \eqref{HarmHod}, the Weitzenböck formula \eqref{Wformula} and formula \eqref{Saction} give, for $Y \in \Gamma(T\mathbb{S}^n)$,
\begin{equation}
\label{Firstconsec}
\begin{aligned}
(\trl d\Psi)(Y) &= -(S d\Psi)(Y) = - \sum_i R^M(d\Psi(e_i),d\Psi(Y))d\Psi(e_i) - d\Psi(\Ricc^{\mathbb{S}^n} Y) \\
&= \tr R^M(d\Psi(Y),d\Psi)d\Psi - (n-1)\,d\Psi(Y),
\end{aligned}
\end{equation}
where $d\Psi \in \Gamma(T^*\mathbb{S}^n \otimes \Psi^*TM)$, and $\Psi^*TM$ carry the pullback metric and the pullback of the Levi-Civita connection. Our goal is to compute the action of the Jacobi operator on the sections
\[
\varkappa_k = d\Psi(X_k) \in \Gamma(\Psi^*TM), \qquad k = 1,\dots,n+1,
\]
where $X_k$ are the fields \eqref{KeyField}. Using the Leibniz rule for the induced connections, we get at the point $p$
\begin{equation}
\label{calculationW}
\begin{aligned}
\trl (\varkappa_k) & = - \sum_i \nabla_{e_i}\nabla_{e_i} \big( d\Psi(X_k) \big) \\
&= - \sum_i \Big[ (\nabla_{e_i}\nabla_{e_i} d\Psi)(X_k) + 2 (\nabla_{e_i} d\Psi)(\nabla_{e_i} X_k) + d\Psi(\nabla_{e_i}\nabla_{e_i} X_k) \Big] \\
& = (\trl d\Psi)(X_k) - 2 \sum_i (\nabla_{e_i} d\Psi)(\nabla_{e_i} X_k) + d\Psi(\trl X_k).
\end{aligned}
\end{equation}
By \eqref{nablaX}, \eqref{traceLaplX} and \eqref{Firstconsec} we obtain
\begin{equation}
\label{CalculationW2}
\begin{aligned}
\trl (\varkappa_k) &= \tr R^M(d\Psi(X_k),d\Psi)d\Psi - (n-1)\,d\Psi(X_k) + 2x^k \sum_i (\nabla_{e_i} d\Psi)(e_i) + d\Psi(X_k) \\
&= \tr R^M(d\Psi(X_k),d\Psi)d\Psi - (n-2)\,d\Psi(X_k) + 2x^k\, \tau(\Psi).
\end{aligned}
\end{equation}
Since $\Psi$ is harmonic, $\tau(\Psi) = 0$, and finally
\begin{equation}
\label{CalculationWF}
\trl (\varkappa_k) = \tr R^M(d\Psi(X_k),d\Psi)d\Psi - (n-2)\,d\Psi(X_k).
\end{equation}
Hence
\begin{equation}
\label{ResCalculation}
J(\varkappa_k) = \trl \varkappa_k - \tr(R^{M}(\varkappa_k,d\Psi)d\Psi) = -(n-2)\,\varkappa_k .
\end{equation}
So all $\varkappa_k$ are eigenfields of the Jacobi operator with eigenvalue $-(n-2)$, which is negative for $n \geqslant 3$. Since every linear combination of the $\varkappa_k$ is again an eigenfield with the same eigenvalue, $\Hess E|_\Psi$ is negative definite on $\mathrm{span}\{\varkappa_1,\dots,\varkappa_{n+1}\}$, and it remains to prove that the $\varkappa_k$ are linearly independent.

Suppose that $\sum_k c_k \varkappa_k = 0$ with $c = (c_1,\dots,c_{n+1}) \neq 0$. Put $a = \sum_k c_k s_k= c$. Since $a \mapsto V_a$ is linear, $d\Psi(V_a) = 0$. After a pre-rotation of $\mathbb{R}^{n+1}$ we may assume that $a = |a|\, s_1$, so that $d\Psi(X_1) = 0$, i.e. $\Psi$ is constant along the integral curves of $X_1 = \grad(x^1|_{\mathbb{S}^n})$. These integral curves are the open meridians joining the poles $-s_1$ and $s_1$, and every point of $\mathbb{S}^n \setminus \{\pm s_1\}$ lies on one of them. Since each meridian accumulates at $s_1$, continuity of $\Psi$ gives $\Psi(x) = \Psi(s_1)$ for all $x \neq -s_1$, and then also for $x = -s_1$. Thus, $\Psi$ is constant, a contradiction. This finishes the proof of the El Soufi theorem.
\end{proof}

\section{Maps to \texorpdfstring{$\mathbb{CP}^m$}{CP^m}}
Our main goal in this section is to prove Theorem 2, which extends the result of Chen \cite{Chen2} to the setting where the geometry of the domain forbids non-constant stable harmonic maps.

We use the Fubini-Study metric on $\mathbb{CP}^m$ normalized to have a constant holomorphic sectional curvature equal to $4$. It is convenient to recall the standard embedding of $\mathbb{CP}^m$ into the space $\Herm(m+1)$ of Hermitian $(m+1)\times(m+1)$ matrices, see e.g. \cite{canEmb}, \cite{RosStaff}, for $z = (z_0,\dots,z_m) \in \mathbb{C}^{m+1}$ with $\sum_i |z_i|^2 = 1$,
\[
[z_0:\dots:z_m] \mapsto A_z = (z_i \bar z_j)_{i,j} = z z^*,
\]
that is, $A_z$ is the orthogonal projection onto the line $\mathbb{C}z$; the image is
\[
\{A \in \Herm(m+1) \mid A^2 = A,\ \tr A = 1\}.
\]
With the scalar product $\langle A, B\rangle = \frac{1}{2}\tr(AB)$ on $\Herm(m+1)$ this embedding is isometric for the Fubini-Study metric of holomorphic sectional curvature $4$, and it is equivariant under the action $A \mapsto PAP^{-1}$ of $U(m+1)$. Particularly, $\mathbb{CP}^m \cong U(m+1)/(U(1) \times U(m))$ is a Hermitian symmetric space. The tangent and normal spaces at $A$ are
\[
T_A\mathbb{CP}^m = \{ X \in \Herm(m+1) \mid XA + AX = X \}\]
\[N_A\mathbb{CP}^m = \{ Y \in \Herm(m+1) \mid AY = YA \}.
\]
We shall not need the extrinsic geometry of this embedding, the proof of Theorem 2 is intrinsic and uses only the parallelism of the complex structure $\J$ and the curvature tensor of $\mathbb{CP}^m$, which in our convention is
\begin{equation}
\label{CPcurv}
R(X,Y)Z = \langle Y,Z\rangle X - \langle X,Z\rangle Y + \langle \J Y,Z\rangle \J X - \langle \J X,Z\rangle \J Y + 2\langle X,\J Y\rangle \J Z,
\end{equation}
so that
\begin{equation}
\label{CPsec}
\langle R(V,U)U,V \rangle = |V|^2|U|^2 - \langle V,U\rangle^2 + 3\langle \J V,U\rangle^2 .
\end{equation}

\begin{proof}[Proof of Theorem 1]
Let $n \geqslant 3$. 
Recall that $\varkappa_k = d\Psi(X_k)$, $k = 1,\dots,n+1$, are linearly independent eigenfields of Jacobi operator $J$ with eigenvalue $-(n-2)$, see Section 2.3. Since $\ind \Psi = n+1$, they span the negative eigenspace of $J$, and all other eigenvalues of $J$ are nonnegative, hence $J + (n-2) \geqslant 0$. Define
\begin{equation}
\label{Qdef}
Q(w) := \Hess E|_\Psi(w,w) + (n-2)\int_{\mathbb{S}^n} |w|^2\, dV, \qquad w \in \Gamma(\Psi^*T\mathbb{CP}^m).
\end{equation}
Then $Q \geqslant 0$, and $Q(\varkappa_k) = 0$ for every $k$.

Fix a point $x \in \mathbb{S}^n$, an orthonormal basis $e_1,\dots,e_n$ of $T_x\mathbb{S}^n$, and set
\[
u_i = d\Psi(e_i), \qquad A_\Psi = d\Psi \circ d\Psi^* \in \mathrm{End}(T_{\Psi(x)}\mathbb{CP}^m),
\]
where $d\Psi^*$ is the pointwise adjoint of $d\Psi$. By \eqref{XkId}, $\sum_k X_k \otimes X_k^* = \Id_{T\mathbb{S}^n}$, where $Y^* = \langle Y, \cdot\rangle$, hence
\begin{equation}
\label{Akappa}
A_\Psi = \sum_{i=1}^n u_i \otimes u_i^* = \sum_{k=1}^{n+1} \varkappa_k \otimes \varkappa_k^* .
\end{equation}
Since $\mathbb{CP}^m$ is Kähler, $\nabla \J = 0$, and therefore
\[
|\nabla(\J\varkappa_k)| = |\nabla \varkappa_k|, \qquad |\J\varkappa_k| = |\varkappa_k| ,
\]
so the difference $Q(\J\varkappa_k) - Q(\varkappa_k)$ comes only from the curvature term in \eqref{SecondVar}. From \eqref{CPsec} we get
\begin{equation}
\label{CPdiff}
\langle R(\J V,U)U,\J V\rangle - \langle R(V,U)U,V\rangle = 4\big( \langle V,U\rangle^2 - \langle \J V,U\rangle^2 \big),
\end{equation}
and therefore
\begin{equation}
\label{Qdiff}
Q(\J\varkappa_k) - Q(\varkappa_k) = -4\int_{\mathbb{S}^n} \sum_i \Big( \langle \varkappa_k,u_i\rangle^2 - \langle \J\varkappa_k,u_i\rangle^2 \Big) dV .
\end{equation}
By \eqref{Akappa},
\[\sum_{i,k} \langle \varkappa_k,u_i\rangle^2 = \sum_i \langle A_\Psi u_i, u_i\rangle = \tr(A_\Psi^2), \]
\[
\sum_{i,k} \langle \J\varkappa_k,u_i\rangle^2 = \sum_i \langle A_\Psi \J u_i, \J u_i\rangle = -\tr(A_\Psi \J A_\Psi \J).
\]
Since $A_\Psi$ is symmetric and $\J$ is skew-symmetric, $[A_\Psi,\J]$ is symmetric and \[|[A_\Psi,\J]|^2 = \tr([A_\Psi,\J]^2) = 2\tr(A_\Psi^2) + 2\tr(A_\Psi \J A_\Psi \J).\] Summing \eqref{Qdiff} over $k$ and using $Q(\varkappa_k) = 0$ we obtain
\begin{equation}
\label{Qsum}
\sum_{k=1}^{n+1} Q(\J\varkappa_k) = -4\int_{\mathbb{S}^n} \Big( \tr(A_\Psi^2) + \tr(A_\Psi \J A_\Psi \J) \Big) dV = -2\int_{\mathbb{S}^n} |[A_\Psi,\J]|^2\, dV .
\end{equation}
The left-hand side of \eqref{Qsum} is nonnegative since $Q \geqslant 0$, and the right-hand side is nonpositive. Hence both sides vanish. In particular $[A_\Psi, \J] = 0$ on $\mathbb{S}^n$, i.e.
$$d\Psi \circ d\Psi^* \circ \J = \J \circ d\Psi \circ d\Psi^* ,$$
which is (a) by condition (1) in the definition of PHWC.

Let us note that we can extract more information from \eqref{Qsum}. Every term of  its left-hand side is nonnegative and the sum vanishes, hence
\begin{equation}
\label{QJkappa}
Q(\J\varkappa_k) = 0 \qquad \text{for all } k = 1,\dots,n+1 .
\end{equation}
Define the following operator $L = J + (n-2)$. This is a self-adjoint elliptic operator on $\Gamma(\Psi^*T\mathbb{CP}^m)$ with $L \geqslant 0$, and $Q(w) = \int_{\mathbb{S}^n} \langle Lw, w\rangle\, dV$. By \eqref{QJkappa},
\[
\J\varkappa_k \in \Ker L \qquad \text{for all } k .
\]
Kernel of $L$ contains
$
\mathcal{K} := \mathrm{span}_{\mathbb{R}}\{\varkappa_1,\dots,\varkappa_{n+1}\},
$
which has dimension $n+1$. Thus we get $\Ker L = \mathcal{K}$, and we already saw that
\begin{equation}
\label{JK}
\J \mathcal{K} \subset \mathcal{K} .
\end{equation}

Now we can define the following map
\[
\Lambda : \mathbb{R}^{n+1} \to \mathcal{K}, \qquad \Lambda(a) = d\Psi(V_a) = \sum_{k} a_k \varkappa_k ,
\]
which is an isomorphism, by linear independence of the $\varkappa_k$. By \eqref{JK} we can define the following endomorphism
\[
S := \Lambda^{-1} \circ \J \circ \Lambda \in \mathrm{End}(\mathbb{R}^{n+1})
\]
 and by construction $\J\, d\Psi(V_a) = \J\Lambda(a) = \Lambda(Sa) = d\Psi(V_{Sa})$, which is \eqref{Cintertwine}. Moreover, $S^2 = \Lambda^{-1} \J^2 \Lambda = -\Id$, this proves (c). 

For (b), note that $S$ is a real $(n+1)\times(n+1)$ matrix, so $\det S \in \mathbb{R}$ and
$$(\det S)^2 = \det(S^2) = \det(-\Id_{\mathbb{R}^{n+1}}) = (-1)^{n+1},$$
which forces $n+1$ to be even.

Finally, we make a remark that since $A_\Psi = d\Psi \circ d\Psi^*$ commutes with $\J$, its image is $\J$-invariant. In addition, we have $\langle A_\Psi v, v\rangle = |d\Psi^* v|^2$, and also $\Ker A_\Psi = \Ker d\Psi^*$. Hence $\text{Im} A_\Psi = (\Ker d\Psi^*)^{\perp} = \text{Im} d\Psi_x = d\Psi_x(T_x\mathbb{S}^n)$. Thus, $d\Psi_x(T_x\mathbb{S}^n)$ is a complex subspace of $T_{\Psi(x)}\mathbb{CP}^m$, and its real dimension is even, hence the dimension is at most $n-1$ because $n$ is odd. This finishes the proof of Theorem 1.
\end{proof}
Corollary 2 immediately follows.
\begin{Remark}\label{remarkBumpMetr}
The proof uses only that $\J$ is parallel and the identity \eqref{CPdiff}. If the target is an oriented surface $\Sigma$ with Gaussian curvature $K$ and $\J$ is the rotation by $\pi/2$, then $\nabla \J = 0$ and $\langle R(\J V,U)U,\J V\rangle - \langle R(V,U)U,V\rangle = K\big( \langle V,U\rangle^2 - \langle \J V,U\rangle^2 \big)$, and the same computation gives
$$\sum_{k=1}^{n+1} Q(\J\varkappa_k) = -\frac{1}{2}\int_{\mathbb{S}^n} K\, |[A_\Psi,\J]|^2\, dV .$$
Hence, if $K > 0$ is everywhere, a harmonic map $\Psi: \mathbb{S}^n \to \Sigma$ of index $n+1$ satisfies $[A_\Psi,\J] = 0$, which for a target being a surface means that $\Psi$ is WHC. In particular, Corollary 1 is the case $m = 1$ of Theorem 2. Because the round metric of $\mathbb{S}^2$ is a constant multiple of the Fubini-Study metric of $\mathbb{CP}^1$, the index and the harmonic morphism property are not affected by a constant rescaling of the target metric. Thus, we have that Corollary 1 remains true for every metric of positive curvature on $\mathbb{S}^2$. Compare this with the result of Riviere \cite{Riviere}.
\end{Remark}

\section{One map to $\Grtf$}
In this section, we show that the extension of the result of \cite{StabGrass} to the case of the domain $\mathbb{S}^n$ is not straightforward.

\subsection{Geometry of $\Gr$}
We recall basic facts about the complex Grassmannian $\Gr$ of $k$-dimensional complex subspaces of $\mathbb{C}^{N}$ in the matrix model, in order to fix conventions and for the convenience of the reader, all facts below are classical, see e.g. \cite{Helga}. We have
\[
\Gr \;\cong\; \{\, P\in \mathrm{Mat}_{N\times N}(\mathbb{C}) \mid P^2=P,\; P^\ast = P,\; \tr(P)=k \,\},
\]
the space of Hermitian projections of rank $k$. The tangent space at a point $P$ is
\[
T_P \Gr = \{\, X\in \mathrm{Mat}_{N\times N}(\mathbb{C}) \mid X^\ast = X,\; XP + PX = X \,\} = \{\, [A,P] \mid A \in \mathfrak{u}(N) \,\}.
\]
The Grassmannian inherits a $\mathrm{U}(N)$-invariant metric from the embedding into the space of Hermitian matrices, for $X,Y \in T_P\Gr$
\[
\langle X,Y \rangle_P = \tr(X Y^*) = \tr(XY),
\]
the last equality holds because $Y^* = Y$. This is the restriction of the Hilbert-Schmidt metric on Hermitian matrices, and $\Gr \cong \mathrm{U}(N)/(\mathrm{U}(k)\times\mathrm{U}(N-k))$ is a compact symmetric space with this metric. Note that for $k = 1$ this metric is twice the metric used in Section 2. But the index and the PHWC property are not affected  by a constant rescaling of the target metric.

The Levi-Civita connection is obtained by projecting the ambient derivative to the tangent space,
\[
\nabla_X Y = \pi_T\big( dY(X) \big), \qquad \pi_T(Z) = [P,[P,Z]] = PZ + ZP - 2PZP ,
\]
where $\pi_T$ is the orthogonal projection of a Hermitian matrix $Z$ onto $T_P\Gr$.

The Grassmannian is a Hermitian symmetric space. Its complex structure in the matrix model is
\[
\J_P(X) = i\,[P,X] = i\,(2P - I)X, \qquad X \in T_P\Gr .
\]
Indeed, $i[P,X]$ is Hermitian and tangent at $P$, and $\J_P^2 X = -[P,[P,X]] = -\pi_T(X) = -X$. The metric is Hermitian, $\langle \J X, \J Y\rangle = \langle X,Y\rangle$, the Kähler form is $\omega(X,Y) = \langle \J X, Y\rangle$, and since $\Gr$ is a Kähler homogeneous manifold, $\J$ is parallel, $\nabla \J = 0$.

Being a compact Kähler symmetric space, $\Gr$ has curvature given by a commutator formula, see e.g. \cite{Helga}, for $X,Y,Z\in T_P \Gr$,
\begin{equation}
\label{GrCurv}
R(X,Y)Z = [[X,Y],Z] .
\end{equation}
In particular $\langle R(X,Y)Y,X\rangle = -\tr([X,Y]^2) = |[X,Y]|^2 \geqslant 0$, the sectional curvature is nonnegative, and it is strictly positive on holomorphic two-planes.

\subsection{Non-extension of the results to the Grassmannian case}

To present the map $\Psi$ of Theorem 2 we use the quadric model of $\Grtf$. The Plücker embedding identifies $\Grtf$ with the Klein quadric
\[
Q^4 = \{ [z] \in \mathbb{CP}^5 \mid z \cdot z = 0 \}, \qquad z \cdot w = \sum_{i=0}^5 z_i w_i ,
\]
where $z \cdot w$ is the complex bilinear form on $\mathbb{C}^6$. Throughout this section, we denote by $g_0$ the metric induced on $Q^4 \cong \Grtf$ by the Fubini–Study metric of $\mathbb{CP}^5$ of holomorphic sectional curvature $4$. The metric $g_0$ is $\mathrm{U}(4)$-invariant, hence, it is a symmetric metric on $\Grtf$, and it is a constant multiple of the matrix-model metric of previous subsection, in fact, it equals $2g_0$. Since the index and the PHWC property are invariant under constant rescalings of the target metric, we work with $g_0$.

Write $\mathbb{R}^6 = \mathbb{R}e_0 \oplus \mathbb{R}^5$ and let $\mathbb{S}^4 \subset \mathbb{R}^5$ be the unit sphere. Define
\[
\Psi: \mathbb{S}^4 \to \Grtf \cong Q^4 \subset \mathbb{CP}^5, \qquad \Psi(x) = [e_0 + ix].
\]
Since $(e_0+i x)\cdot(e_0+i x) = |e_0|^2-|x|^2 = 0$, we have $\Psi(x)\in Q^4$.

This map is very well-known in the theory of totally geodesic embeddings into symmetric spaces \cite{ChenNagano1}.

\subsubsection{The induced metric}

Lift $\Psi$ to the Hopf fibration, i.e. unit sphere $\mathbb{S}^{11} \subset \mathbb{C}^6$
\[
\widehat\Psi(x) = \frac{1}{\sqrt2}(e_0+i x).
\]
For $X\in T_x\mathbb S^4$ we have $d\widehat\Psi(X) = \frac{i}{\sqrt2}X$ and
\[
\big\langle \widehat\Psi(x),d\widehat\Psi(X) \big\rangle_{\mathbb C} = \frac{1}{2}\big( -i\langle e_0, X\rangle + \langle x, X\rangle \big) = 0,
\]
so $d\widehat\Psi(X)$ is horizontal with respect to the Hopf fibration $\mathbb{S}^{11} \to \mathbb{CP}^5$, which is a Riemannian submersion onto $(\mathbb{CP}^5, g_{FS})$. Therefore
\begin{equation}
\label{PsiHomothetic}
\Psi^*g_0 (X,X) = |d\widehat\Psi(X)|^2 = \tfrac{1}{2}\, g_{\mathbb S^4}(X,X) :
\end{equation}
$\Psi$ is a homothetic immersion with dilation $1/\sqrt2$, or equivalently, an isometric immersion for the matrix-model metric $2g_0$.

Now define $\rho \in O(6)$ by
\[
\rho(e_0)=e_0, \qquad \rho|_{e_0^\perp}=-\Id,
\]
and
\[
\sigma: Q^4 \longrightarrow Q^4, \qquad \sigma([z])=[\rho\bar z].
\]
Since $\rho$ is real orthogonal, $\sigma$ preserves the quadric and is an anti-holomorphic isometry of $(Q^4,g_0)$. Moreover, for $x\in\mathbb S^4$,
\[
\rho\overline{(e_0+i x)} = \rho(e_0-i x) = e_0+i x,
\]
hence $\Psi(\mathbb S^4) \subset \Fix(\sigma)$. A connected component of the fixed point set of an isometry is a totally geodesic submanifold. Since $\sigma$ is anti-holomorphic, its fixed point set is totally real, hence, has a real dimension of at most $\dim_{\mathbb{C}} Q^4 = 4$. Thus, $\Psi(\mathbb{S}^4)$ is a totally geodesic Lagrangian submanifold. Being a homothetic immersion into a totally geodesic submanifold, $\Psi$ satisfies $\nabla d\Psi = 0$, in particular, $\Psi$ is harmonic.

\subsubsection{The map $\Psi$ is not PHWC}

Let
\[
H = d\Psi(T\mathbb S^4) \subset \Psi^*T\Grtf .
\]
Since the image is Lagrangian, $\J H = H^\perp$. Let $A_\Psi = d\Psi\circ d\Psi^*$ be as in Section 2, the adjoint being taken with respect to $g_{\mathbb{S}^4}$ and $g_0$. By \eqref{PsiHomothetic},
\[
A_\Psi = \tfrac12\, P_H ,
\]
where $P_H$ denotes the orthogonal projection onto $H$. For $0\neq V\in H$ we have $\J V\in H^\perp$, so $A_\Psi(\J V)=0$, while $\J A_\Psi(V) = \frac12 \J V$. Consequently
\[
[A_\Psi,\J]V = A_\Psi(\J V)-\J A_\Psi(V) = -\tfrac12\,\J V\neq0 ,
\]
and $\Psi$ is not PHWC by condition (1) in the definition.

\subsubsection{Computation of the Morse index}

Recall that the curvature tensor of the complex quadric $(Q^4,g_0)$ in our convention is
\begin{align}
R(X,Y)Z ={}& \langle Y,Z\rangle X - \langle X,Z\rangle Y
+ \langle \J Y,Z\rangle \J X - \langle \J X,Z\rangle \J Y - 2\langle \J X,Y\rangle \J Z \notag\\
&+ \langle \mathcal{A}Y,Z\rangle \mathcal{A}X - \langle \mathcal{A}X,Z\rangle \mathcal{A}Y
+ \langle \J\mathcal{A}Y,Z\rangle \J\mathcal{A}X - \langle \J\mathcal{A}X,Z\rangle \J\mathcal{A}Y,
\label{QuadricCurvature}
\end{align}
where $\mathcal{A}$ is a local real structure on $TQ^4$, determined up to a $S^1$ factor and satisfies
\[
\mathcal{A}^2=\Id, \qquad \mathcal{A}\J=-\J\mathcal{A}.
\]
At the point $[e_0 + ix]$ the tangent space of $Q^4$ is identified with $T_x\mathbb{S}^4 \otimes \mathbb{C} \subset \mathbb{C}^5$, i.e. the vectors horizontal for the Hopf fibration and tangent to the cone $z \cdot z = 0$, the real structures are $v \mapsto \lambda \bar v$, $\lambda \in S^1$, and $d\sigma(v) = \rho \bar v = -\bar v$. Along the  $\Psi(\mathbb S^4) = \Fix(\sigma)$ we choose $\mathcal{A} = d\sigma$. We have that the tangent bundle $H$ of the image of the embedding is the $(+1)$-eigenspace of $\mathcal{A}$,
\[
\mathcal{A}|_H=\Id, \qquad \mathcal{A}|_{\J H}=-\Id.
\]

Let $e_1,\ldots,e_4$ be a local orthonormal frame on the unit sphere and $u_i = d\Psi(e_i)$, so that $|u_i|^2_{g_0} = \frac12$ by \eqref{PsiHomothetic}. Put $E_i=\sqrt2\,u_i$. Thus, we have $E_1,E_2,E_3,E_4$, which is an orthonormal frame of $H$.

Let $W\in H$. Using \eqref{QuadricCurvature} together with $\mathcal{A}E_i=E_i$, $\mathcal{A}W=W$ and $\langle \J W, E_i\rangle = \langle \J E_i, E_i \rangle = 0$, we obtain
\[
R(W,E_i)E_i = 2\left( W-\langle W,E_i\rangle E_i \right).
\]
Therefore
\begin{equation}
\label{CurvTangentialRealForm}
\sum_{i=1}^4R(W,E_i)E_i = 6W \qquad \text{and} \qquad \sum_{i=1}^4R(W,u_i)u_i = 3W .
\end{equation}

 For $\J W\in \J H$ we have $\mathcal{A}(\J W)=-\J W$, and formula \eqref{QuadricCurvature} gives
\[
R(\J W,E_i)E_i = 2\langle W,E_i\rangle \J E_i .
\]
Summing over $i$, we obtain
\begin{equation}
\label{CurvNormalRealForm}
\sum_{i=1}^4 R(\J W,E_i)E_i = 2\J W \qquad \text{and} \qquad \sum_{i=1}^4 R(\J W,u_i)u_i = \J W .
\end{equation}

Since the image of embedding is totally geodesic and $\J$ is parallel, the orthogonal decomposition
\[
\Psi^*T\Grtf = H\oplus \J H
\]
is parallel and could be done in the level of bundles. It follows that every section can be written uniquely as
\[
V=d\Psi(X)+\J d\Psi(Y), \qquad X,Y\in\Gamma(T\mathbb S^4).
\]
Since $\nabla d\Psi=0$ and $\nabla \J = 0$, we have
\[
\Delta_B^\Psi(d\Psi(X)) = d\Psi(\Delta_B^{\mathbb S^4}X), \qquad \Delta_B^\Psi(\J d\Psi(Y)) = \J d\Psi(\Delta_B^{\mathbb S^4}Y).
\]
We recall that
\[
J_\Psi(V) = \Delta_B^\Psi V - \sum_{i=1}^4R(V,u_i)u_i .
\]
Using \eqref{CurvTangentialRealForm} and \eqref{CurvNormalRealForm} we obtain
\begin{equation}
\label{JacobiTangentialRealForm}
J_\Psi(d\Psi(X)) = d\Psi\left( \Delta_BX-3X \right), \qquad J_\Psi(\J d\Psi(Y)) = \J d\Psi\left( \Delta_BY-Y \right).
\end{equation}
Thus, under the identification $\Psi^*T\Grtf \simeq T\mathbb S^4\oplus T\mathbb S^4$, the Jacobi operator is
\begin{equation}
\label{JacobiSplitRealForm}
J_\Psi = (\Delta_B-3) \oplus (\Delta_B-1).
\end{equation}
Note that $\Delta_B X_k = X_k$ by \eqref{traceLaplX}, so $J_\Psi(d\Psi(X_k)) = -2\, d\Psi(X_k) = -(n-2)\, d\Psi(X_k)$ with $n = 4$, in agreement with \eqref{ResCalculation}.

It remains to recall the spectrum of $\Delta_B$ on vector fields of $\mathbb S^4$. We can identify vector fields and one-forms  using the metric. By the Weitzenböck formula, on one-forms of $\mathbb{S}^4$
\[
\Delta_H = \Delta_B + \Ricc = \Delta_B+3 .
\]
We can use a classical calculation for the laplacian on the forms on $\mathbb{S}^n$ given in \cite{Taneguchi}. In our case, we have that $\Delta_B \geqslant 1$ on one-forms of $\mathbb{S}^4$, and the eigenvalue $1$ being attained exactly on our fields $X_k$ for $k = 1,2,3,4,5$.  Hence, $\Delta_B - 1 \geqslant 0$, while $\Delta_B - 3$ is negative exactly on the $5$-dimensional space spanned by the $X_k$ . Therefore
$\ind(\Psi)=5$

One could check that the nullity of $\Psi$ equals $10 + 5 = 15$, which is consistent with the $15$-dimensional isometry group of $\Grtf$.

This finishes the proof of Theorem 3.

\begin{center}
\textbf{Acknowledgment}
\end{center}
The author is grateful to Alexei Penskoi for drawing his attention to this problem, for fruitful discussions, constant support, and help in the preparation of this paper. The author is also grateful to Mikhail Karpukhin and Iosif Polterovich for their valuable remarks on the manuscript and for pointing out the error in an earlier version of the text. The work was partially supported by the scholarship of the Theoretical Physics and Mathematics Advancement Foundation ``BASIS'' (No.~24-8-2-10-1).

\textbf{Usage of AI:} The map and reference that were used to construct an example in Section 3 were found with the help of ChatGPT 5,6; all calculations are by the author, who takes full responsibility for the mathematical content of the paper. \textbf{GenAI}  was used to improve the  structure of the text and for proofreading at the final stage. 

\bibliographystyle{alphaurl}
\bibliography{HarmonicMaps}
\end{document}